\documentclass[a4paper,12pt]{article}
\usepackage{amssymb,amsmath,amsthm,latexsym}
\usepackage{amsfonts}
\usepackage{amsfonts}
\usepackage{graphicx}
\usepackage[pdftex,bookmarks,colorlinks=false]{hyperref}
\usepackage{verbatim}
\usepackage[font=small,labelfont=bf]{caption}
\usepackage{subcaption}
\usepackage[hmargin=1.2in,vmargin=1.2in]{geometry}
\usepackage{authblk}
\usepackage{multirow}
\setcounter{Maxaffil}{7}
\newtheorem{theorem}{Theorem}[section]

\newtheorem{corollary}[theorem] {Corollary}
\newtheorem{definition}[theorem]{Definition}

\newtheorem{problem}[theorem]{Problem}
\newtheorem{proposition}[theorem]{Proposition}

\setlength{\parskip}{2.5pt}

\title{\bf A study on the curling number of certain graph classes}
\author{Susanth C}
\affil{\small Department of Mathematics\\ Research \& Development Centre\\ Bharathiar University\\ Coimbatore - 641046, Tamilnadu, India.\\ email:{\em susanth\_c@yahoo.com}} 
  
\author{Sunny Joseph Kalayathankal}
\affil{\small Department of Mathematics\\ Kuriakose Elias College\\ Kottayam - 686561, Kerala, India.\\ email:{\em sunnyjoseph2014@yahoo.com}}

\author{Naduvath Sudev}
\affil{\small Department of Mathematics\\ Vidya Academy of Science \& Technology\\  Thrissur - 680501, Kerala, India.\\ email:{\em sudevnk@gmail.com}}

\author{Kaithavalappil Chithra}
\affil{\small Naduvath Mana, Nandikkara\\ Thrissur, India.\\ email:{\em chithrasudev@gmail.com}}

\author{Johan Kok}
\affil{\small Tshwane Metro Police Department\\ City of Tshwane, South Africa.\\ email:{\em kokkiek2@tshwane.gov.za}}

\date{}

\begin{document}
\maketitle
\newpage
\begin{abstract}
Given a finite nonempty sequence $S$ of integers, write it as $XY^k$, consisting of a prefix $X$ (which may possibly be empty), followed by $k$ copies of a non-empty string $Y$. Then, the greatest such integer $k$ is called the curling number of $S$ and is denoted by $cn(S)$. The notion of curling number of graphs has been introduced in terms of their degree sequences, analogous to the curling number of integer sequences.  In this paper, we study the curling number of certain graph classes and graphs associated to given graph classes.
\end{abstract}
{\bf Keywords :} Curling Number of a graph, Compound Curling Number of a graph.

\noindent {\bf Mathematics Subject Classification :} 05C07, 11B50, 11B83. 

\section{Introduction}
For terms and definitions in graph theory, see \cite{BM1,CGT,FH,DBW,RJW} and for more about different graph classes, refer to \cite{AVJ, JAG}.  Unlesse mentioned otherwise, the graphs considered in this paper are simple, finite, connected and undirected.  The notion of curling number of integer sequences is introduced in \cite{BJJA} as follows. 

\begin{definition}{\rm
\cite{BJJA} Let $S=S_1S_2S_3\ldots S_n$ be a finite string. Write $S$ in the form $XYY\ldots Y=XY^k$, consisting of a prefix $X$ (which may be empty), followed by $k$ copies of a non-empty string $Y$. This can be done in several ways. Pick one with the greatest value of $k$ . Then, this integer $k$ is called the curling number of $S$ and is denoted by $cn(S)$.}
\end{definition}

\begin{definition}{\rm
The \textit{Curling Number Conjecture} (see \cite{BJJA}) states that if one starts with any finite string, over any alphabet, and repeatedly extends it by appending the curling number of the current string, then eventually one must reach a 1.}
\end{definition}

The concept of curling number of integer sequences has been extended to the degree sequences of graphs in \cite{KSC} and the corresponding properties and characteristics of certain standard graphs have been studied in \cite{KSC, SS1,SS2}.  

\begin{definition}{\rm
\cite{KSC} A maximal degree subsequence with equal entries is called an \textit{identity subsequence}.  An identity subsequence can be a curling subsequence and the number of identity curling subsequences found in a simple connected graph $G$ is denoted $ic(G)$.}
\end{definition}

The following is an important and relevant result on curling numbers of graphs, which is relevant in our present study.  


\begin{theorem}
\cite{KSC} For the degree sequence of a non-trivial, connected graph $G$ on $n$ vertices, the curling number conjecture holds.
\end{theorem}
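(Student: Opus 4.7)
The plan is to analyze the curling process starting from the degree sequence $S=(d_1,d_2,\ldots,d_n)$ of the connected graph $G$ and show that iterated extension by curling numbers must eventually append a $1$. The starting observation is structural: since $G$ is connected and simple, every entry satisfies $1\le d_i\le n-1$, so $S$ is a finite string of length $n$ over a finite alphabet of size at most $n-1$. Consequently $cn(S)\le n$, because any representation $XY^k$ with $|Y|\ge 1$ forces $k|Y|\le n$.

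I would then split into two cases according to the structure of $S$. In the easy case, $G$ is regular and $S=(d,d,\ldots,d)$, so $cn(S)=n$; appending this value produces $(d,d,\ldots,d,n)$, and since $n$ exceeds every previous entry, no non-trivial suffix can be a power, giving curling number $1$ immediately. In the general case, write $S=XY^k$ with $k=cn(S)\ge 2$ and $Y$ non-empty, and form $S'=XY^k k$. The crux is to track how the pattern $Y^k$ interacts with the appended symbol $k$: either $k$ coincides with the appropriate symbol of $Y$ so that the exponent can, in principle, rise, or the match fails and the curling number of $S'$ collapses, in which case the process is pushed closer to termination.

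The main obstacle, which I expect to be the hardest step, is ruling out an indefinitely long chain of non-trivial curling numbers. The naive bound $cn(S_i)\le |S_i|=n+i$ grows with the number of extensions and so gives nothing on its own. What is needed is a monotone quantity tied to the structure of $S$ that decreases (or is bounded) across each extension. A natural candidate is a potential defined in terms of the identity subsequences of $S_i$ (the $ic$-counts introduced earlier), since an identity subsequence of length $\ell$ can support a power of exponent at most $\ell$, and appending the curling number can only lengthen at most one such block while shortening the available scope for others. Making this idea precise, together with exploiting the fact that the initial alphabet is bounded by $n-1$, should force the curling numbers along the process to descend to $1$ in finitely many steps.

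I would finish by combining the two cases to conclude that for every non-trivial connected graph $G$ on $n$ vertices, the iterated curling process terminates at $1$, which is the curling number conjecture for the degree sequence of $G$.
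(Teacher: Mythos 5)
First, a point of comparison: the paper does not actually prove this statement. The theorem is quoted from the earlier preprint \cite{KSC} and appears here without proof, so there is no in-paper argument to measure yours against; your proposal has to stand on its own. Its preliminary observations are fine: for a non-trivial connected simple graph, $1\le d_i\le n-1$, the string has length $n$, and $cn(S)\le n$; and the regular case works, since appending $n$ to $d^n$ with $d\le n-1$ introduces a symbol that occurs exactly once, at the end, so every candidate suffix $Y^k$ with $k\ge 2$ would need that symbol to occur at least twice, and the next curling number is $1$.

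The genuine gap is in the general case, at precisely the step you flag as hardest. Everything rests on a potential function ``defined in terms of the identity subsequences'' that is supposed to decrease across extensions and force termination, but you never define this quantity, never prove it is monotone, and conclude only that making it precise ``should'' work. That assertion is the entire content of the theorem: the Curling Number Conjecture for arbitrary finite strings is a well-known open problem exactly because no such monotone potential is known, and the hard instances live over the small alphabet $\{2,3\}$, which a degree sequence can certainly use. A proof must therefore exploit what is special about degree sequences as the paper treats them, namely that the string is \emph{well-arranged} into identity blocks $d_1^{n_1}\circ d_2^{n_2}\circ\cdots\circ d_r^{n_r}$: monotonicity forces any suffix of the form $Y^k$ with $k\ge 2$ to be constant, so the first appended value is the multiplicity of the final block, and the subsequent two or three extensions can be tracked explicitly (each appended value either differs from the symbol of the terminal run, which caps that run, or extends it exactly once before a differing value is appended). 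Your sketch never uses the well-arranged structure, and in the step $S'=XY^k k$ it also only examines decompositions that prolong the original period $Y$, whereas the maximal power of $S'$ may arise from an entirely different period. Without either the explicit case analysis on sorted strings or a concretely defined and verified potential, the argument does not close.
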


\begin{definition}{\rm 
\cite{KSC} A \textit{curling subsequence} of a simple connected graph $G$ is defined to be a maximal subsequence $C$ of the well-arranged degree sequence of $G$ such that $cn(C)=max\{cn(S_0)\}$ for all possible initial subsequences $S_0$.  The \textit{curling number} of a graph $G$ is the curling number of a curling subsequence $C$ of $G$.  That is, $cn(G)=cn(C)$, where $C$ is a curling subsequence of $G$.  Note that a graph $G$ can have a number of curling subsequences.}
\end{definition}

\noindent The following theorem is an important result on the curling number of a given graph.

\begin{theorem}
\cite{KSC} If a graph $G$ is the union of $m$ simple connected graphs $G_i;1\leq i\leq m$ and the respective degree sequences are re-arranged as strings of identity subsequences, then
\[ cn(G) = \begin{cases}
 \max\{cn(G_i)\} & \text{; if $X_i,X_j$ are not pairwise similar},\\
        \max $$\sum_{i=1}^{m} k_i $$ & \text{; for all integer of similar identify subsequences}.
\end{cases}\] 
\end{theorem}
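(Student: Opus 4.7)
My plan is to reduce the statement to a careful bookkeeping argument on identity subsequences, using the fact that the degree sequence of a disjoint union is the multiset union of the individual degree sequences. I would first fix notation: for each $G_i$, write its well-arranged degree sequence as a concatenation of identity subsequences $X_{i,1}X_{i,2}\cdots X_{i,r_i}$, where $X_{i,j}=d_{i,j}^{k_{i,j}}$ records a degree value $d_{i,j}$ repeated $k_{i,j}$ times. Then the degree sequence of $G=\bigcup_{i=1}^m G_i$, as a multiset, is the disjoint union of all these blocks. The first step of the proof is to show that any well-arrangement of the degree sequence of $G$ into identity subsequences must group together all vertices of equal degree (regardless of which $G_i$ they came from), because an identity subsequence is maximal with equal entries.

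Next I would split into the two cases stated. For the first case, assume that for all $i\neq j$ no block $X_{i,\cdot}$ shares its degree value with any block $X_{j,\cdot}$, i.e.\ the $X_i$'s are pairwise non-similar. Then every identity subsequence of $G$ coincides with some identity subsequence of a single $G_i$, and hence the set of curling subsequences of $G$ is exactly the union of the curling subsequences of the $G_i$'s. Applying the definition of $cn$ to $G$ and comparing with each $cn(G_i)$ then immediately gives $cn(G)=\max_i\{cn(G_i)\}$.

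For the second case, suppose some collection of blocks $\{X_{i,j_i}:i\in I\}$ across distinct $G_i$'s share a common degree value $d$, with multiplicities $k_{i,j_i}$. In the well-arrangement of $G$, these blocks merge into a single identity subsequence $d^{K}$ with $K=\sum_{i\in I}k_{i,j_i}$, giving curling number $K$ for that block. Taking the maximum over all such coincidences of similar identity subsequences yields the second case of the formula. I would then argue optimality: any identity subsequence of $G$ is of one of these two types (either inherited from a single $G_i$ or formed by merging similar $X_{i,\cdot}$'s across several $G_i$'s), so the curling number of $G$ is the maximum over both possibilities, which exactly matches the piecewise formula.

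The main obstacle I anticipate is the bookkeeping in the merging step: one has to be careful that when several groups of blocks with distinct degree values each permit merging, the maximum is taken separately for each degree value (and then over all degree values), rather than naively summing over all $i$. A secondary subtlety is justifying that the ``well-arranged'' rearrangement of the combined degree sequence does not create any new identity subsequence beyond those described above; this follows from the definition of identity subsequence as a maximal block of equal entries, but should be stated explicitly to close the argument.
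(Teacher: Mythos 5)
The paper does not actually prove this theorem: it is stated as a result quoted from \cite{KSC}, so there is no in-paper argument to compare yours against. Judged on its own, your reconstruction is essentially the right one and matches the evident intent of the statement: the degree sequence of the union is the multiset union of the components' degree sequences, identity subsequences with a common degree value merge and their multiplicities add, and the curling number is the maximum over degree values of these merged multiplicities, which specialises to $\max_i\{cn(G_i)\}$ when no degree value is shared across components. Your closing remarks about taking the maximum separately for each degree value, and about no new identity subsequences arising beyond inherited and merged ones, are exactly the points that need to be made explicit. The one step you lean on without stating it is that for a well-arranged (sorted) degree sequence the string curling number reduces to the largest block multiplicity; this is true because in a monotone string any repeated block $Y^k$ with $k\geq 2$ forces $Y$ to be constant, but it is worth a line of justification since the general definition of $cn$ permits non-constant $Y$.
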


As we have already seen, the degree sequence of an arbitrary graph $G$ can be written as a string of identity curling subsequences and hence the notion of compound curling number of a graph $G$ has been introduced in \cite{KSC} as given below.

\begin{definition}{\rm
\cite{KSC} Let the degree sequence of a graph $G$ be written as a string of identity curling subsequences, say $X_1^{k_1} \circ X_2^{k_2} \circ X_3^{k_3} \ldots \circ X_l^{k_l}$.  The \textit{compound curling number} of $G$, denoted by $cn^c(G)$, is defined to be $cn^c(G)=\prod_{i=1}^{l} k_i.$}
\end{definition}

The curling number and compound curling number of certain fundamental standard graphs have been determined in \cite{KSC}. 


\begin{proposition}
\cite{KSC} The compound curling number of any regular graph $G$ is equal to its curling number.
\end{proposition}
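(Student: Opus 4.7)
The plan is to unpack the definitions and observe that for a regular graph everything collapses to a single identity block. If $G$ is $r$-regular on $n$ vertices, then its (well-arranged) degree sequence is simply $\underbrace{r,r,\ldots,r}_{n \text{ times}}$, which is already one identity subsequence.

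First I would note that, in the notation of the compound-curling-number definition, this forces $l=1$, $X_1=(r)$, and $k_1=n$, so the string expression $X_1^{k_1}\circ\cdots\circ X_l^{k_l}$ is just $X_1^{n}$. Consequently
\[
cn^c(G)=\prod_{i=1}^{l}k_i=k_1=n.
\]

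Next I would compute $cn(G)$. Since the entire degree sequence is $(r)^{n}$, taking the prefix $X$ empty and $Y=(r)$ writes it as $Y^{n}$, giving $cn$ at least $n$; clearly $k$ cannot exceed $n$ because $Y$ is nonempty. Hence $cn(G)=n$, and the only curling subsequence of $G$ is the full degree sequence itself, which confirms that this value is the one prescribed by the definition of curling number of a graph.

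Combining the two computations yields $cn^c(G)=n=cn(G)$, proving the proposition. There is no real obstacle here; the only thing to be careful about is verifying that the decomposition into identity subsequences is genuinely unique (a single block) so that the product in $cn^c$ has exactly one factor, which is immediate from the definition of identity subsequence as a \emph{maximal} subsequence with equal entries.
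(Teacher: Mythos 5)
Your proof is correct: for an $r$-regular graph the well-arranged degree sequence is the single identity block $(r)^n$, so both $cn(G)$ and $cn^c(G)$ equal $n$, and this is exactly the definitional argument one would expect. Note that the paper itself states this proposition as a cited result from [KSC] without reproducing a proof, so there is no in-paper argument to compare against; your version supplies the missing (and essentially unique) justification.
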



In this paper, we extend these studies on curling number of graphs to the associated graphs of certain graph classes. By the size of a sequence, we mean the number of elements in that sequence.

\subsection{Curling number of complements of graphs}

The most interesting question that arises when we study about the curling number of certain graphs associated with given graphs is about the curling number of the complements of the given graphs. The following proposition discusses the curling number of the complements of given graphs. 
 
\begin{proposition}
For any graph $G, cn(\bar{G})=cn(G)$ and $cn^c(\bar{G})=cn^c(G)$.
\end{proposition}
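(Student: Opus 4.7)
The plan is to exploit the elementary identity $d_{\bar{G}}(v) = (n-1) - d_{G}(v)$ for every vertex $v$ of a graph $G$ on $n$ vertices, and to show that the map $\phi : d \mapsto (n-1)-d$ induces a length-preserving bijection between the identity subsequences in the well-arranged degree sequences of $G$ and of $\bar{G}$.

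First, I would observe that $\phi$ is a bijection on $\{0,1,\ldots,n-1\}$, and in particular is injective. Hence two vertices $u,v$ satisfy $d_G(u)=d_G(v)$ if and only if $d_{\bar{G}}(u)=d_{\bar{G}}(v)$. It follows that if the well-arranged degree sequence of $G$ decomposes into identity subsequences as $X_1^{k_1}\circ X_2^{k_2}\circ \cdots \circ X_l^{k_l}$, then the degree sequence of $\bar{G}$, suitably re-arranged, decomposes as $\phi(X_1)^{k_1}\circ \phi(X_2)^{k_2}\circ \cdots \circ \phi(X_l)^{k_l}$. The multiset $\{k_1,k_2,\ldots,k_l\}$ of exponents is therefore identical for $G$ and $\bar{G}$, and the similarity relation $X_i=X_j$ is preserved under $\phi$ because $\phi$ is injective.

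The second identity $cn^c(\bar{G})=cn^c(G)$ is then immediate: by definition $cn^c(G)=\prod_{i=1}^{l}k_i$, and this product depends only on the multiset of exponents, which has just been shown to be complement-invariant. For the first identity $cn(\bar{G})=cn(G)$, I would appeal to Theorem 1.5. That result expresses $cn(G)$ either as $\max\{k_i\}$, when the $X_i$ are pairwise distinct, or as the maximum of sums $\sum k_i$ taken over families of mutually similar identity subsequences. Since $\phi$ preserves the equivalence $X_i=X_j \iff \phi(X_i)=\phi(X_j)$, the collections of similar identity subsequences coincide for $G$ and $\bar{G}$, and the maxima of the corresponding sums coincide.

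The main subtlety I expect is reconciling the two \emph{well-arranged} orderings: if we write the degree sequence of $G$ in non-increasing order, then $\phi$ sends it to a non-decreasing sequence, so the identity blocks of $\bar{G}$'s well-arranged sequence appear in the reverse order. This reversal, however, does not affect either invariant, since the compound curling number is a product over blocks and the curling number (via Theorem 1.5) depends only on the multiset of pairs $(X_i,k_i)$ together with the similarity pattern, none of which is affected by re-ordering the blocks.
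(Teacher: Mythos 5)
Your proposal is correct and follows essentially the same route as the paper: both rest on the identity $d_{\bar{G}}(v)=(n-1)-d_G(v)$, from which the degree sequence of $\bar{G}$ is the blockwise image of that of $G$ with the exponent multiset $\{n_1,\ldots,n_r\}$ unchanged, giving both equalities at once. Your version merely makes explicit two points the paper leaves tacit --- that injectivity of $d\mapsto (n-1)-d$ prevents distinct identity blocks from merging, and that the reversal of block order under complementation is harmless --- which is a welcome tightening rather than a different argument.
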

\begin{proof}
Assume $d_1^{n_1}\circ d_2^{n_2} \circ \ldots \circ d_r^{n_r}$ be the degree sequence of a graph $G$.  Without loss of generality, $n_r=max(n_i, i=1 \ldots r)$.  

Now we have $G\cup\bar{G}=K_n$. Hence $d_G(V)+d_{\bar{G}}(V)=n-1$.  Therefore, $d_{\bar{G}}(V)=(n-1)-d_G(V)$. Hence, the degree sequence of $\bar{G}$ will be $(n-1-d_1)^{n_1} \circ (n-1-d_2)^{n_2} \circ \ldots \circ (n-1-d_r)^{n_r}$.  Since $n_r=\{n_i\}, 1\leq i \leq r$, we have $cn(\bar{G})=n_r = cn(G)$.  Also, $cn^c(\bar{G})=\prod_{i=1}^{r}n_i=cn^c(G)$. 
\end{proof}

\subsection{Curling number of line graphs}
Another well known graph that is associated with a given graph is its line graph. The curling number of a line graph of a regular graph is determined in the following result.

\begin{proposition}
For a regular graph $G(V,E)$, the curling number of the line graph $L(G)=|E|$.
\end{proposition}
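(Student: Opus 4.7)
The plan is to exploit the fact that regularity of $G$ forces regularity of $L(G)$, so that the degree sequence of $L(G)$ collapses to a single identity subsequence.

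First I would recall the standard fact about line graphs: for an edge $e=uv$ in $G$, the degree of the corresponding vertex in $L(G)$ equals $d_G(u)+d_G(v)-2$, since the neighbors of $e$ in $L(G)$ are precisely the edges sharing an endpoint with $e$ (and $e$ itself is excluded). Assuming $G$ is $r$-regular, this value is $2r-2$ for every edge $e$, independent of the choice of $e$.

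Next I would note that $L(G)$ has exactly $|E|$ vertices (one per edge of $G$) and, by the previous step, every vertex has the same degree $2r-2$. Hence the well-arranged degree sequence of $L(G)$ is the single identity subsequence
\[
(2r-2)^{|E|},
\]
which can be written as $X Y^{|E|}$ with the empty prefix $X$ and $Y=(2r-2)$. No rewriting can produce a larger exponent $k$, since the sequence has only $|E|$ entries in total. Therefore $cn(L(G))=|E|$.

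The argument is essentially a direct computation, so I do not anticipate a genuine obstacle; the only point to be careful about is justifying that no other factorization $XY^k$ of this constant string yields $k>|E|$, which is immediate because $k$ cannot exceed the length of the string when $Y$ is nonempty. One could also invoke the earlier proposition that $cn^c(G)=cn(G)$ for regular graphs to underline that the whole degree sequence forms a single block, confirming that $cn(L(G))=|E|$.
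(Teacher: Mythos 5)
Your proof is correct and follows essentially the same route as the paper: both arguments observe that $L(G)$ is $(2r-2)$-regular on $|E|$ vertices and conclude that the constant degree sequence forces $cn(L(G))=|E|$. You simply make explicit the degree computation $d_G(u)+d_G(v)-2$ and the maximality of the exponent, which the paper leaves implicit.
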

\begin{proof}
Let $G$ be an $r$-regular graph on $n$ vertices.  Then $L(G)$ is a $2r-2$ regular graph on $|E|$ vertices. Since $L(G)$ is also regular, we have $cn(L(G))=|V(L(G))|=|E|$.
\end{proof}

The bounds for the sum of curling numbers of a regular graph and its line graph is determined in the following proposition.

\begin{proposition}
If $G(V,E)$ is a regular graph then $2|V|-1\leq cn(G)+cn(L(G))\leq \frac{|V|(|V|+1)}{2}$.
\end{proposition}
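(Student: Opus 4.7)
The plan is to reduce both quantities to a single-parameter expression in the common degree $r$. Suppose $G$ is $r$-regular on $|V|$ vertices. Its degree sequence is the single identity block $r^{|V|}$, so by the preceding observations on regular graphs $cn(G)=|V|$. The previous proposition gives $cn(L(G))=|E|$. Using the handshake identity $|E|=\tfrac{r|V|}{2}$, I would combine these into
\[
cn(G)+cn(L(G))=|V|+\tfrac{r|V|}{2}=\tfrac{|V|(r+2)}{2},
\]
after which the proof reduces to bracketing $r$ between its extreme admissible values.

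For the upper bound, simplicity of $G$ forces $r\leq |V|-1$, so $\tfrac{|V|(r+2)}{2}\leq \tfrac{|V|(|V|+1)}{2}$, with equality when $G=K_{|V|}$. For the lower bound, connectedness of a regular graph on $|V|\geq 2$ vertices forces $r\geq 1$. When $r\geq 2$, the expression $\tfrac{|V|(r+2)}{2}$ is already at least $2|V|>2|V|-1$. The only way to hit $r=1$ is $G=K_2$, in which case $L(G)=K_1$ and a direct check gives $cn(G)+cn(L(G))=2+1=3=2|V|-1$, saturating the lower bound.

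The main obstacle is the degenerate boundary case $G=K_2$: there $L(G)$ is the singleton graph with degree sequence $(0)$, so one has to justify that its curling number is $1$ (which is consistent with the formula $cn(L(G))=|E|$ from the previous proposition) before the lower bound can be declared sharp. Everything else is routine substitution and a two-case split on $r$.
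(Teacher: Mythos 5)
Your proof is correct and follows essentially the same route as the paper: both reduce the sum to $cn(G)+cn(L(G))=|V|+|E|$ via $cn(G)=|V|$ for regular $G$ and $cn(L(G))=|E|$ from the preceding proposition, and both identify $K_2$ and $K_{|V|}$ as the extremal cases. Your case split on $r$ (handling $r=1$ versus $r\geq 2$) is, if anything, a slightly cleaner way to get the lower bound than the paper's detour through the fact that the sparsest connected graphs are trees.
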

\begin{proof}
Let $G$ be an regular graph on $n$ vertices.  Then $L(G)$ is also regular on $|E|$ vertices.  By the above preposition, we have $cn(L(G))=|E|$.  
Therefore, we have 
\begin{equation}\label{eq1}
cn(G)+cn(L(G))=|V|+|E|
\end{equation}
The minimum connected graph is a tree, for which $|E|=|V|-1$.  In this case, $|V|+|E|=2|V|-1$.  If $n=2$, we have $G\cong K_2, L(G)\cong K_1$. Also $cn(G)=cn(K_2)=2, cn(L(G))=cn(K_1)=1$.  Therefore, $cn(G)+cn(L(G))=|V|+|E|=3=2|V|-1$. Now note that any tree with $n>2$ is not a regular graph and hence we have
\begin{equation}\label{eq2}
2|V|-1<cn(G)+cn(L(G))
\end{equation}
If $G$ is a maximal connected then $G\cong K_n$.  Here $L(G)$ is a $2|V|-4$ regular graph and hence $cn(L(G))=|V(L(G))=\frac{|V|[|V|-1]}{2}$.
Hence, $cn(G)+cn(L(G))=|V|+|E|=|V|+\frac{|V|[|V|-1]}{2}=\frac{|V|[|V|+1]}{2}$.
Therefore, $2|V|-1\leq cn(G)+cn(L(G))\leq \frac{|V|(|V|+1)}{2}$. 
\end{proof}

For $n\geq 3$, a \textit{wheel graph} $W_{n+1}$ is the graph $K_1+ C_{n}$ (see \cite{FH}).  A wheel graph $W_{n+1}$ has $n+1$ vertices and $2n$ edges. 

The curling number of a wheel graph is determined in \cite{KSC}. Now we determine the curling number and compound curling number of the line graph of a wheel graph. 

\begin{proposition}
For a wheel graph $W_{n+1}=C_n+K_1$, we have $cn(L(W_{n+1})=cn(W_{n+1}))$ and $cn^c(L(W_{n+1})=(cn(W_{n+1}))^2$.
\end{proposition}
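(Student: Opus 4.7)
The plan is to compute the degree sequences of $W_{n+1}$ and $L(W_{n+1})$ explicitly, write each as a concatenation of identity subsequences, and then read off both $cn$ and $cn^{c}$ directly from those decompositions.

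First I would record the degree sequence of $W_{n+1}=C_{n}+K_{1}$. The hub is joined to every rim vertex and therefore has degree $n$; each of the $n$ rim vertices has two cyclic neighbours plus one spoke and therefore has degree $3$. So the well-arranged degree sequence of $W_{n+1}$ is $n^{1}\circ 3^{n}$, from which $cn(W_{n+1})=n$ and $cn^{c}(W_{n+1})=1\cdot n=n$.

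Next I would pass to $L(W_{n+1})$ using the standard line-graph degree identity $d_{L(G)}(e)=d_{G}(u)+d_{G}(v)-2$ for an edge $e=uv$. The $2n$ edges of $W_{n+1}$ split naturally into two classes: the $n$ spokes, each of the form $(\text{hub},v)$ and contributing an $L$-vertex of degree $n+3-2=n+1$, and the $n$ rim edges, each of the form $(v,v')$ with $v,v'$ consecutive on $C_{n}$ and contributing an $L$-vertex of degree $3+3-2=4$. Hence the degree sequence of $L(W_{n+1})$ is $(n+1)^{n}\circ 4^{n}$. Provided $n+1\ne 4$, the two blocks carry distinct values, the identity-subsequence decomposition is forced, and we read off $cn(L(W_{n+1}))=n$ and $cn^{c}(L(W_{n+1}))=n\cdot n=n^{2}$. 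Comparing with the previous paragraph gives $cn(L(W_{n+1}))=cn(W_{n+1})$ and $cn^{c}(L(W_{n+1}))=(cn(W_{n+1}))^{2}$, as required.

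The only real obstacle is the small case $n=3$, where the hub degree coincides with the rim degree, the two line-graph blocks collapse to $4^{2n}$, and in fact $W_{4}\cong K_{4}$ with $L(W_{4})$ the octahedron; both are regular, and the earlier regularity proposition gives $cn(W_{4})=4$ while $cn(L(W_{4}))=6$, so the stated equalities genuinely need the hypothesis $n\ge 4$. I would therefore make that hypothesis explicit at the top of the proof and then present the three-line computation above as the body.
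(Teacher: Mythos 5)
Your proposal is correct and follows the same overall strategy as the paper's proof (compute the degree sequence of $L(W_{n+1})$ as two identity blocks of length $n$ each and read off $cn$ and $cn^{c}$), but your execution is more careful and in fact corrects the paper on two points. First, the paper argues structurally that the spoke-edges induce a clique $K_n$ in $L(W_{n+1})$ and then records their degree as $n-1$, forgetting that each spoke-vertex is also adjacent to the two rim-edge vertices at its outer endpoint; your use of the identity $d_{L(G)}(uv)=d_G(u)+d_G(v)-2$ gives the correct value $n+1$, so the degree sequence is $(n+1)^{n}\circ 4^{n}$ rather than the paper's $(n-1)^{n}\circ 4^{n}$. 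This slip does not change the exponents, so the paper's conclusion survives, but only because of your second observation: the two blocks must carry distinct values for the decomposition to be $X_1^{n}\circ X_2^{n}$, and this fails exactly at $n=3$, where $W_4\cong K_4$, $L(W_4)$ is the $4$-regular octahedron, and $cn(L(W_4))=6\neq 4=cn(W_4)$. The paper states and proves the proposition without excluding this case, so your explicit hypothesis $n\geq 4$ is not pedantry but a genuine repair of the statement.
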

\begin{proof}
Let $G\cong W_{n+1}=C_n+K_1$.  Let $L=L(G)$.  The vertices in $L$ corresponding to the spokes (the edges connecting the internal vertex and the vertices in the outer cycle) of $G$ induce a clique $K_n$ in $L$.  Moreover the vertices in $L$ corresponding to the edges in the outer cycle $C_n$ in $G$ have degree $4$ in $L$, since for any edge $v_iv_{i+1}$ in $C_n$, there will be two more adjacent edges to both $v_i$ and $v_{i+1}$ in $G$.  Therefore, the degree sequence of $L$ is given by $S=(n-1)^n \circ (4)^n$.  Therefore, $cn(L)=n=cn(G)$ and $cn^c(L)=n^2=(cn(G))^2$. 
\end{proof}

Another similar graph whose curling number has been determined in \cite{KSC} is a \textit {helm graph} (\cite{JAG}) which is defined to be a graph obtained from a wheel by attaching one pendant edge to each vertex of the cycle. In the following result, we determine the curling number and compound curling number of a helm graph. 

\begin{corollary}
For a helm graph $H_n$, we have $cn(L(H_n))=n$ and $cn^c(L(H_n))=cn(H_n)^3$.
\end{corollary}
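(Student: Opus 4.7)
The plan is to mimic the proof strategy used for the wheel graph above, only now the helm $H_n$ has three types of edges, giving three identity blocks in the degree sequence of $L(H_n)$ instead of two.

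First, I would fix notation. Let $h$ be the hub, $v_1,\ldots,v_n$ the cycle vertices of the underlying wheel, and $u_1,\ldots,u_n$ the pendants attached so that $u_iv_i$ is an edge. The edge set of $H_n$ partitions into three natural families of size $n$ each: the spokes $\{hv_i\}$, the rim edges $\{v_iv_{i+1}\}$, and the pendant edges $\{v_iu_i\}$. Thus $L(H_n)$ has $3n$ vertices, organized into three classes of $n$ each.

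Next, I would compute the degree of a representative vertex of each class in $L(H_n)$ by counting incidences in $H_n$. A spoke $hv_i$ meets the other $n-1$ spokes at $h$, the two rim edges incident with $v_i$, and the pendant edge $v_iu_i$, giving degree $n+2$. A rim edge $v_iv_{i+1}$ meets two spokes, two other rim edges, and two pendant edges, giving degree $6$. A pendant edge $v_iu_i$ meets only the spoke $hv_i$ and the two rim edges at $v_i$, giving degree $3$. Consequently the well-arranged degree sequence of $L(H_n)$ is
\[
S \;=\; (n+2)^{n}\circ 6^{n}\circ 3^{n}.
\]

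From this the two claims are immediate. Since $S$ splits into three identity blocks each of length $n$ (and these blocks are not pairwise similar, as the three degree values $n+2$, $6$, $3$ are distinct for $n\geq 5$, with the small cases checked separately if needed), we obtain $cn(L(H_n))=n$ and $cn^{c}(L(H_n))=n\cdot n\cdot n=n^{3}$. Recalling that the degree sequence of the helm itself is $n^{1}\circ 4^{n}\circ 1^{n}$, so that $cn(H_n)=n$, the second equality rewrites as $cn^{c}(L(H_n))=(cn(H_n))^{3}$, which is what the corollary asserts.

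The only subtle point — and the one I would check with some care — is the potential collision between the three degree values for very small $n$ (for instance $n=4$ gives $n+2=6$, collapsing two blocks into one of length $2n$). If such a collision alters the count, it would do so by \emph{increasing} the maximal $k$ in a block, so I would either restrict the statement to $n\geq 5$ or verify the small cases by hand and fold them into the argument. Apart from this, the proof is a direct degree count followed by applying the definitions of $cn$ and $cn^{c}$.
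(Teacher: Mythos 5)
Your argument follows the same route as the paper's: partition the edges of $H_n$ into spokes, rim edges and pendant edges, compute the degree of each class in the line graph, and read off $cn$ and $cn^c$ from the resulting three identity blocks of length $n$. In fact your degree count is the correct one and the paper's is not: a spoke $hv_i$ has line-graph degree $d(h)+d(v_i)-2=n+4-2=n+2$ (the $n-1$ other spokes at the hub plus the two rim edges and one pendant edge at $v_i$), whereas the paper writes the spoke block as $(n-1)^n$, apparently counting only the clique on the spokes and forgetting the three further neighbours at $v_i$. Since only the exponents matter for the conclusion, the paper's slip is harmless to its final formula, but your caveat about collisions is a genuine issue that the paper overlooks: for $n=4$ one has $n+2=6$, the first two blocks merge into $6^{2n}$, and then $cn(L(H_4))=8\neq 4$ and $cn^c(L(H_4))=8\cdot 4\neq 4^3$, so the corollary as stated actually fails there and needs the restriction $n\geq 5$ (with $n=3$ checked separately) that you propose. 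Your version is therefore not just a rederivation but a correction of both the degree sequence and the range of validity.
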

\begin{proof}
Let $H_n$ be a helm graph on $2n+1$ vertices and $3n$ edges. The vertices in $L(H_n)$ corresponding to the spokes (the edges connecting the internal vertex and the vertices in the outer cycle) of $H_n$ induce a clique $K_n$ in $L(H_n)$. Moreover the vertices in $L(H_n)$ corresponding to the edges in the outer cycle $C_n$ in $H_n$ have degree $6$ in $L(H_n)$, since for any edge $v_iv_{i+1}$ in $C_n$, there will be three more adjacent edges to both $v_i$ and $v_{i+1}$ in $H_n$. Therefore, the degree sequence of $L(H_n)$ is given by $S=(n-1)^n \circ (6)^n\circ (3)^n$.  Therefore, $cn(L(H_n))= cn(H_n)=n$ and $cn^c(L(H_n))=n^3=cn((H_n)^3)$. 
\end{proof}

\subsection{Curling number of subdivision of a graph}

First, recall the notion of the subdivision of a graph, which is defined as given below.

\begin{definition}{\rm 
A {\em subdivision} of a graph $G$ is a graph obtained by introducing a new vertex to every edge of G (see \cite{CH,RD}).}
\end{definition}

In the following result, the curling number of a subdivision of a graph $G$ is determined.

\begin{proposition}
The curling number of a subdivision of a graph $G$ is $\epsilon+r$, where $r$ is the number of vertices of degree 2 in $G$ and $\epsilon$ is the number of edges of $G$.
\end{proposition}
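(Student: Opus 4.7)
The plan is to write down the degree sequence of the subdivision $S(G)$ explicitly and then to identify the longest identity block, which will pin down $cn(S(G))$.

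First, I would use the construction: $S(G)$ has vertex set $V(G)\cup\{w_e:e\in E(G)\}$, where $w_e$ is the fresh vertex that subdivides edge $e=uv$ into the two edges $uw_e$ and $w_ev$. Every $w_e$ has degree $2$, and every original vertex $v\in V(G)$ retains its original degree $\deg_G(v)$, since each edge of $G$ incident to $v$ is replaced by exactly one edge of $S(G)$ incident to $v$. Consequently, the degree multiset of $S(G)$ is exactly the degree multiset of $G$ augmented by $\epsilon$ additional entries equal to $2$, so $S(G)$ has precisely $\epsilon+r$ vertices of degree $2$.

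Next, I would verify that no other identity block in the well-arranged degree sequence of $S(G)$ is longer than this. Let $n_d$ denote the number of vertices of $G$ of degree $d\ne 2$. For $d\ge 3$, the handshake lemma gives $d\cdot n_d\le 2\epsilon$, hence $n_d\le 2\epsilon/3<\epsilon+r$. For $d=1$, each leaf of $G$ determines its unique incident edge and distinct leaves determine distinct edges (as long as $G\ncong K_2$), so $n_1\le\epsilon\le\epsilon+r$. Hence, among all identity blocks of the well-arranged degree sequence of $S(G)$, the block $2^{\epsilon+r}$ is of maximum length.

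Finally, since the identity blocks of $S(G)$ carry pairwise distinct degree values, any prefix terminating exactly at a given block has curling number equal to the length of that block, and no cross-block repetition can exceed that length (any $Y^k$ suffix of the prefix lying entirely inside the last block uses $Y$ a constant string of $2$'s, which gives $k$ at most the block length). Selecting the prefix that ends at the degree-$2$ block therefore yields $cn(S(G))=\epsilon+r$.

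The hard part will be the bound on $n_1$: distinct leaves of a connected graph other than $K_2$ have distinct incident edges, and this tightness is the reason the formula needs the mild assumption $G\ncong K_2$ to be clean. The $d\ge 3$ estimate is a straightforward strict inequality from the handshake lemma, so the only delicate case is ruling out that a long block of leaves competes with the degree-$2$ block.
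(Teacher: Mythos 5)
Your proof is correct and follows essentially the same route as the paper's: write the degree sequence of the subdivision as $2^{\epsilon+r}$ followed by the identity blocks of the original degrees different from $2$, and verify that no other block can be longer. Your verification of that last step (handshake lemma for $d\geq 3$, the leaf-to-incident-edge injection for $d=1$) is cleaner and more rigorous than the paper's garbled tree/non-tree case analysis, and your observation that $K_2$ must be excluded (its subdivision $P_3$ has curling number $2\neq\epsilon+r=1$) identifies a genuine exception that the paper overlooks.
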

\begin{proof}
Let $V'$ be the set of new vertices introduced to the edges of $G$.  Since a vertex is introduced to every edge of $G$, we have $|G|=\epsilon$, where $\epsilon$ is the number of edges of $G$.  For every verted $v'$ in $V', d(v')=2$, where $0\leq r \leq n$.  Therefore, the degree sequence of the subdivision graph $G$ is given by $(2)^{\epsilon+r}\circ S_0$, where $S_0$ is a subsequence of the degree sequence $S$ which is of the form $(a_1)^{r_1}\circ (a_2)^{r_2}\circ \ldots \circ (a_k)^{r_k}$ with $a_i\neq 2$.

\textbf{Case - 1 }: When $G$ is a tree.  Then $G$ has atleast 2 pendent vertices.  If all the internal vertices of $G$ have the same degree, then $r+r_1+r_2+\ldots +r_k=n$.  Therefore, $\sum_{i=1}^{k}r_i\geq n-2 < n-1 (=\epsilon)$.  Hence the number of elements in $S$ can have a power that is greter than $\epsilon$.  Therefore, $\epsilon+r$ is the highest power of elements in $S$.

\textbf{Case - 2} : If $G$ is not a tree.  Then we have $\epsilon \geq |V|(=n)$, that is $\epsilon+r$ is the highest power of an element in the sequence $S$ of the subdivision graph $G'$ of the given graph $G$.  Therefore, $cn(G')=\epsilon +r$.
\end{proof}

\subsection{Curling number of super subdivision of a graph}

Let us first recall the definition of the super subdivision of a given graph as follows.

\begin{definition}{\rm 
\cite{IS} Let $G$ be a graph with $n$ vertices and $\epsilon$ edges.  Then a \textit{super subdivision} $H$ of $G$ is a graph obtained by replacing every edge $e_i$ of $G$ by a complete bipartite graph $K_{2,m}$.  An \textit{arbitrary super subdivision} $G'$ of $G$ is a graph obtained by replacing every edge $e_i$ of $G$ by a complete bipartite graph $K_{2,m_i}$, where $1\leq i \leq \epsilon$.}
\end{definition}

The curling number of the super subdivision of a graph is determined in the following theorem.

\begin{theorem}
The curling number of an arbitrary super subdivision graph is $$cn(G')=\sum_{i=1}^{\epsilon}m_i$$.
\end{theorem}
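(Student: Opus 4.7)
The plan is to compute the degree sequence of $G'$ explicitly and then read off the length of its longest identity block.

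First I would unpack the local effect of the construction. Replacing the edge $e_i = uv$ of $G$ by a copy of $K_{2,m_i}$ identifies $u$ and $v$ with the two vertices on the part of size $2$, and introduces $m_i$ new vertices on the other part, each adjacent to both $u$ and $v$. Consequently every newly introduced vertex has degree exactly $2$ in $G'$, while an original vertex $v \in V(G)$ acquires degree
\[
d_{G'}(v) \;=\; \sum_{e_i \ni v} m_i .
\]
Summed over all $\epsilon$ edges of $G$, this produces exactly $\sum_{i=1}^{\epsilon} m_i$ newly introduced vertices, all of degree $2$.

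Next I would assemble the well-arranged degree sequence of $G'$ as a concatenation of identity blocks. It contains the block $(2)^{\sum_{i=1}^{\epsilon} m_i}$ coming from the new vertices (absorbing any of the $n = |V(G)|$ original vertices whose weighted-edge sum happens to equal $2$, which can only enlarge this block), followed by identity blocks produced by the remaining original vertices, grouped by the values of $\sum_{e_i \ni v} m_i$. Each such ``original-vertex'' block has size at most $n$, whereas the block of $2$'s has size at least $\sum_{i=1}^{\epsilon} m_i \geq \epsilon$.

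Finally I would compare block sizes to single out a curling subsequence. For any connected $G$ one has $\epsilon \geq n - 1$, so no identity block built only from the $n$ original vertices can outnumber the block of $2$'s. The curling number is therefore delivered by this block, giving $cn(G') = \sum_{i=1}^{\epsilon} m_i$. The main obstacle I foresee is the borderline tree case with every $m_i = 1$, where $\sum m_i = n - 1$: one must rule out a rival identity block of comparable size among the original vertices, which follows from the fact that any tree has at least one pendant vertex and at least one non-pendant vertex, so no single degree value is shared by all $n$ originals; stray degree-$2$ originals merely merge into the $(2)$-block rather than forming a competing identity subsequence.
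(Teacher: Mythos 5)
Your proof follows essentially the same route as the paper's: compute the degree sequence of $G'$, note that the $\sum_{i=1}^{\epsilon} m_i$ newly introduced vertices form an identity block of $2$'s, and argue that no identity block formed by the original vertices can be longer. You are in fact more careful than the paper, which simply asserts that each $t_j<\sum_{i=1}^{\epsilon}m_i$ without the comparison $t_j\le n$ versus $\sum m_i\ge \epsilon\ge n-1$ or the borderline tree analysis; the only case escaping both treatments is the degenerate $G\cong K_2$ with $m_1=1$, where the statement itself fails since $G'\cong P_3$ has curling number $2$.
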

\begin{proof}
Let $G$ be a graph on $n$ vertices and $\epsilon$ edges.  Also, let $S=(d_1, d_2, \ldots d_n)$ be the degree sequence of $G$.  Without loss of generality, let $S=(a_1)^{r_1}\circ (a_2)^{r_2}\circ \ldots \circ (a_k)^{r_k}$.  Let $G'$ be an arbitrary super subdivision of a graph $G$ obtained by replacing its edges by the complete bipartite graph $K_{2,m_i}$; $1\leq i \leq \epsilon$.  Clearly $V(G)\subset V(G')$. It can then be noted that the degree sequence of the vertices in $V(G)$ in $G'$ is given by $(m_1d_1, m_2d_2, \ldots, m_nd_n)$.  Let $S_1$ be the degree subsequence of the vertices of $G$ in $G'$. Therefore, $S_1=(\alpha_1)^{t_1} \circ (\alpha_2)^{t_2} \circ \ldots \circ (\alpha_l)^{t_l}$; where $\alpha_i$ takes the form $\sum m_sd_s$ for some values of $s\leq \epsilon$. Let $V' = V(G)'-V(G)$.  Then, every element $V'$ is of degree 2 and then, the degree sequence of the vertices in $V'$ is $2^{\sum m_i}, 1\leq i\leq \epsilon$. Therefore, the degree sequence of $G'=S_1\circ S = (2)^{\sum m_i}\circ (\alpha_1)^{t_1} \circ (\alpha_2)^{t_2}\circ \ldots (\alpha_k)^{t_l}$.  It is to be noted that each $t_j<\sum_{i=1}^{\epsilon}m_i$ and hence $cn(G')=\sum_{i=1}^{\epsilon}m_i$.
\end{proof}

In view of the above theorem, the curling number of a super subdivision of a graph $G$ can be determined as follows.

\begin{corollary}
The curling number of a super subdivision graph is $cn(G')=m\epsilon$ where $\epsilon$ is the size of $G$.
\end{corollary}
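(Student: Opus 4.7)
The plan is to derive this corollary as an immediate specialization of the preceding theorem. The theorem establishes that for an \emph{arbitrary} super subdivision $G'$, obtained by replacing each edge $e_i$ of $G$ with a copy of $K_{2,m_i}$, the curling number is $cn(G')=\sum_{i=1}^{\epsilon}m_i$. A (non-arbitrary) super subdivision is by definition the uniform case in which every edge is replaced by the \emph{same} complete bipartite graph $K_{2,m}$, so all the parameters $m_i$ coincide.

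First, I would invoke the definition of the super subdivision $H$ of $G$ to set $m_i=m$ for every $i$ with $1\le i\le \epsilon$. Next, I would substitute this uniform value directly into the formula furnished by the previous theorem, obtaining
\[
cn(G')=\sum_{i=1}^{\epsilon}m_i=\sum_{i=1}^{\epsilon}m=m\epsilon.
\]
This gives the stated expression $cn(G')=m\epsilon$, where $\epsilon=|E(G)|$ is the size of $G$.

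There is essentially no obstacle in this argument, since the only work required is to recognise the super subdivision as the uniform instance of an arbitrary super subdivision and to compute a trivial sum of equal terms. If anything needs justification, it would be to remark briefly that the structural hypothesis of the theorem — that the degree sequence of $G'$ takes the form $(2)^{\sum m_i}\circ S_1$, with each multiplicity in $S_1$ strictly less than $\sum m_i$ — is inherited by the uniform case without further verification, since setting $m_i=m$ does not affect the inequality $t_j<\sum_{i=1}^{\epsilon}m_i$ used at the end of the theorem's proof.
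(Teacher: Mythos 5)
Your proposal is correct and follows exactly the same route as the paper's own proof: specialize the preceding theorem by setting $m_i=m$ for all $1\le i\le\epsilon$ and sum the equal terms to get $cn(G')=m\epsilon$. The extra remark about the inequality $t_j<\sum_{i=1}^{\epsilon}m_i$ being inherited in the uniform case is a harmless addition not present in the paper.
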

\begin{proof}
For a subdivision graph $H$ of $G$, each edge is replaced by a complete bipartite graph $K_{2,m}$.  That is $m_i=m$ for all $1\leq i\leq \epsilon$.  Therefore, by above theorem, we have $cn(H)=\sum_{i=1}^{\epsilon}m_i=\sum_{i=1}^{\epsilon}m=\epsilon.m$
\end{proof}

\subsection{Curling number of the shadow graph of a graph}

The shadow graph of a given graph $G$ can be defined as given below.

\begin{definition}{\rm 
\cite{CGT} The \textit{shadow graph} of a graph $G$ is obtained from $G$ by adding, for each vertex $v$ of $G$, a new vertex $v'$, called the shadow vertex of $v$, and joining $v'$ to the neighbours of $v$ in $G$.  The shadow graph of a graph $G$ is denoted by $S(G)$.} 
\end{definition}

The curling number of a shadow graph of a given graph $G$ is determined in the following theorem.

\begin{theorem}
Let $G$ be a graph and $G'$ be its shadow graph.  Then $cn(G')=cn(G)+\eta$, where $\eta$ is the number of vertices of $G$ having degree twice the most repeating degree in the degree sequence of $G$.  
\end{theorem}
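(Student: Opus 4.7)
My plan is to read the degree multiset of the shadow graph directly off the construction and then identify where the largest identity subsequence occurs.

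First I would check how the shadow construction alters degrees. An original vertex $v$ keeps its $d_G(v)$ original neighbours and additionally gains an edge to every shadow $u'$ with $u\in N_G(v)$, so $d_{S(G)}(v)=2d_G(v)$. Each shadow vertex $v'$ is joined only to $N_G(v)$, hence $d_{S(G)}(v')=d_G(v)$. Consequently the degree multiset of $S(G)$ is the disjoint union $\{d_G(v):v\in V(G)\}\cup\{2d_G(v):v\in V(G)\}$, where the first summand comes from shadow vertices and the second from original vertices.

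For the lower bound, write the identity-subsequence decomposition of $G$ as $(a_1)^{r_1}\circ(a_2)^{r_2}\circ\cdots\circ(a_k)^{r_k}$ and let $a_j$ be a most repeating degree, so that $r_j=cn(G)$. The value $2a_j$ appears in $S(G)$ from two disjoint pools: the $r_j$ original vertices of degree $a_j$ (each doubled to $2a_j$), and the $\eta$ shadow vertices whose originals already have degree $2a_j$. Hence $2a_j$ occurs with multiplicity $r_j+\eta$ in the degree sequence of $S(G)$, giving $cn(S(G))\ge cn(G)+\eta$.

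For the matching upper bound, observe that for an arbitrary value $d$ the multiplicity of $d$ in the degree sequence of $S(G)$ equals $\mathrm{mult}_G(d/2)+\mathrm{mult}_G(d)$, interpreting the first summand as zero if $d$ is odd or $d/2$ is not a degree of $G$. Since $r_j=\max_i r_i$, the first summand is always at most $r_j$. I would then stipulate that, when several most repeating degrees exist, $a_j$ is chosen so that $\eta=\mathrm{mult}_G(2a_j)$ is maximal among such candidates; with this convention $\mathrm{mult}_G(d)\le\eta$ for any competing $d$ that realises the first summand as $r_j$, so the total is bounded by $r_j+\eta$, matching Step 2.

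The step I expect to be the main obstacle is this last one: the theorem as written does not specify which most repeating degree to pick when ties occur, and a naive choice can yield a strictly smaller value of $cn(G)+\eta$ than $cn(S(G))$ (e.g.\ $G$ with degree sequence $(1,1,1,2,2,2,4,4,4)$ gives $cn(S(G))=6$ with $a_j\in\{1,2\}$ but only $3$ with $a_j=4$). The honest resolution is to read the theorem as implicitly selecting the optimal $a_j$; once that is done, the lower and upper bounds coincide and the equality $cn(S(G))=cn(G)+\eta$ follows immediately.
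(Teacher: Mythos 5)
Your first two steps are sound and coincide with what the paper does: the degree multiset of $S(G)$ is $\{d_G(v)\}\cup\{2d_G(v)\}$, so the value $2a_j$ occurs $r_j+\eta$ times and $cn(S(G))\geq cn(G)+\eta$. (The paper's own proof in fact stops essentially there, asserting equality after exhibiting this one block.) The genuine gap is in your upper bound, and you have mislocated the obstacle. You correctly reduce to bounding $\mathrm{mult}_G(d/2)+\mathrm{mult}_G(d)$ over all $d$, but your argument only controls the sum for those $d$ with $\mathrm{mult}_G(d/2)=r_j$; when $\mathrm{mult}_G(d/2)<r_j$ you have no bound on $\mathrm{mult}_G(d)$ by $\eta$, and the two moderate summands can together exceed $r_j+\eta$. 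No choice of $a_j$ among the most repeating degrees repairs this, because the statement itself is false. Concretely, take $K_4$ on $u_1,\dots,u_4$, attach two pendants to each of $u_1,u_2,u_3$ plus a common neighbour $v_1$ of $u_1,u_2,u_3$, and join $u_4$ to a triangle $v_2v_3v_4$. This connected graph has degree sequence $(6)^4\circ(3)^4\circ(1)^6$, so $cn(G)=6$ and $\eta=\mathrm{mult}_G(2)=0$, whence the theorem predicts $cn(S(G))=6$; but in $S(G)$ the value $6$ occurs $\mathrm{mult}_G(3)+\mathrm{mult}_G(6)=8$ times, so $cn(S(G))=8$.

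The correct statement, which your multiplicity identity delivers immediately, is $cn(S(G))=\max_a\bigl(\mathrm{mult}_G(a)+\mathrm{mult}_G(2a)\bigr)$, of which $cn(G)+\eta$ is only a lower bound (attained, e.g., when the maximizing $a$ is a most repeating degree). Your observation about ties among most repeating degrees is a real but secondary defect of the theorem's phrasing; the failure above occurs even when the most repeating degree is unique. So your proposal, like the paper's proof, establishes only the inequality $cn(S(G))\geq cn(G)+\eta$, and the proposed closing of the upper bound does not go through.
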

\begin{proof}
Let $G$ be a graph on $n$ vertices and let $S=d_1^{n_1}\circ d_2^{n_2}\circ d_3^{n_3}\circ \ldots \circ d_r^{n_r}$, where $n_1+n_2+n_3+\ldots +n_r=n$. Without lose of generality, let $cn(G)=n_1$.  Now, consider the shadow graph $G'$ of $G$.  Let $U$ denotes the set of vertices in $G'$ which are common to $G$ also and $V$ be the set of newly introduced vertices in $G'$ which are not in $G$.  Therefore, the degree subsequence of $G'$ induced by the vertices in $U$ will be $S_1=(2d_1)^{n_1}\circ (2d_2)^{n_2}\circ (2d_3)^{n_3}\circ \ldots \circ (2d_r)^{n_r}$ and the degree subsequence of $G'$ induced by the vertices in $V$ will be $d_1^{n_1}\circ d_2^{n_2}\circ d_3^{n_3}\circ \ldots \circ d_r^{n_r}$.  If there exists some $d_j$ in $S_2$ (which will also be in $S$), such that $d_j=2d_1$ then $cn(G')=n_1+n_j=cn(G)+\eta$.
\end{proof}




\section{Conclusion}{\rm
In this paper, we have discussed the curling numbers of certain graph classes associated with certain given graphs.  There are several problems in this area which demands intense further investigations.  Some of the open problems we have identified during our study are the following.

\begin{problem}{\rm 
Determine the compound curling number of an arbitrary subdivision of a graph.}
\end{problem}

\begin{problem}{\rm 
Determine the compound curling number of an arbitrary super subdivision of a graph.}
\end{problem}

\begin{problem}{\rm
Determine the curling number and the compound curling number of the line graphs corresponding to other well known small graphs such as sun graphs, sunlet graphs, web graphs etc.}
\end{problem}

\begin{problem}{\rm
Determine the curling number and the compound curling number of the total graphs corresponding to various graph classes.}
\end{problem}

The concepts of curling number and compound curling number of certain graph powers and discussed certain properties of these new parameters for certain standard graphs. More problems regarding the curling number and compound curling number of certain other graph classes, graph operations, graph products and graph powers are still to be settled. All these facts highlights a wide scope for further studies in this area.

\end{document}